\documentclass[10pt]{amsart}
\usepackage{amsmath,amssymb}

\newcommand\D{{\not\! d}}

\newcommand\liek{{\mathfrak k}}

\newcommand\CC{\mathbb C}

\newcommand\RR{\mathbb R}
\newcommand\ZZ{\mathbb Z}
\newcommand\NN{\mathbb N}

\newcommand\M{{\mathrm{M}}}
\newcommand\GL{{\mathrm{GL}}}
\newcommand\SO{{\mathrm{SO}}}
\newcommand\SU{{\mathrm{SU}}}
\newcommand\U{{\mathrm{U}}}

\newcommand\ind{{\mathrm{ind}}}

\newcommand\End{\operatorname{End}}
\newcommand\Mat{\operatorname{Mat}}

\newcommand\LC{\mathrm{LC}}

\makeatletter 
\newcommand\matc[4]{\left( {#1\@@atop #3}{#2\@@atop #4}\right)}
\newcommand\matr[4]{\left( {\hfill #1\@@atop\hfill #3}{\hfill
#2\@@atop\hfill #4}\right)}
\newcommand\matl[4]{\left( { #1\@@atop #3}{ #2\@@atop\hfill #4}\right)}
\makeatother
\newcommand\widearray[1]{\renewcommand\arraystretch{1.4} \begin{array}{#1}}







\theoremstyle{plain}
\newtheorem{thm}{Theorem}[section]

\newtheorem{prop}[thm]{Proposition}
\newtheorem{cor}[thm]{Corollary}

\theoremstyle{definition}

\title[Pre-sequences of MOP]{Pre-sequences of\\ matrix orthogonal polynomials}
\author{Juan Tirao}
\date{\today}
\thanks{This paper was partially supported by CONICET, PIP 112-200801-01533.}
\address{Juan Tirao\\
CIEM-FaMAF\\ Universidad Nacional de C\'or\-do\-ba\\
C\'or\-do\-ba~5000, Argentina\\
tirao@famaf.unc.edu.ar}

\subjclass[2010]{43C05-47L80-22E45-33C45}

\keywords{Matrix orthogonal polynomials, spherical functions, compact symmetric pairs, matrix differential operators}

\begin{document}

\begin{abstract} We introduce the notion of a pre-sequence of matrix orthogonal polynomials to mean a sequence $\{F_n\}_{n\ge0}$ of matrix orthogonal functions  with respect to a weight function $W$, satisfying a three term recursion relation and such that  $\det(F_{0})$ is not  zero almost everywhere. By now there is a uniform construction of such sequences from irreducible spherical functions of some fixed $K$-types associated to compact symmetric pairs $(G,K)$ of rank one. Our main result  is  that $\{Q_n=F_nF_0^{-1}\}_{n\ge0}$ is a sequence of matrix orthogonal polynomials with respect to the weight function $F_0WF_0^*$, see Theorem \ref{main}.
\end{abstract}

\maketitle

\begin{section}{introduction}
In the seminal paper \cite{GPT1} a sequence of matrix polynomials $\{F_n\}_{n\ge0}$ was constructed from the irreducible spherical functions of the pair $(G,K)=(\SU(3),\U(2))$, for each  fixed $K$-type. More precisely,  the restrictions of all these functions to the domain of an appropriate local coordinate $x$ of an Iwasawa subgroup $A$ of $G$,  are carefully arranged as the columns of the matrix polynomials $F_n(x)$, $0<x<1$, $n\ge0$.
The irreducible spherical functions of a pair $(G,K)$, where $G$ is a compact group and $K$ a closed subgroup, of a fixed $K$-type $\pi$ are in a one to one correspondence with the unitary irreducible representations of $G$ that contain $\pi$ as a subrepresentation. In our case,
from Schur orthogonality relations of unitary irreducible representations of $\SU(3)$ it follows that $\{F_n\}_{n\ge0}$ is a set of orthogonal polynomials with respect to a weight function $W=W(x)$ supported on the closed interval $[0,1]$. Besides, the irreducible spherical functions are eigenfunctions of the algebra $D(G)^K$ of all left invariant differential operators on $G$ which are also right invariant under $K$. In particular, this implies that $(F_nD)(x)=\Lambda_nF_n(x)$, for all $0<x<1$, where $D$ is a second order right-hand side linear differential operator with matrix valued coefficients.  Moreover, using the explicit branching rules for the tensor product of the fundamental spherical representation and an irreducible unitary representation of $G$, one proves that each sequence $\{F_n\}_{n\ge0}$ satisfies a three term recurrence relation of the form $xF_n=A_nF_{n-1}+B_nF_{n}+C_nF_{n+1}$, where the coefficient matrices are explicitly known. Such sequences $\{F_n\}_{n\ge0}$ are very close to being  sequences of matrix orthogonal polynomials, but they are not, because $F_n$ is not of degree $n$ and its leading coefficient fails to be nonsingular.

The results in \cite{GPT1, GPT2, GPT3} yield for the first time examples of matrix valued classical orthogonal polynomials $\{\Phi_n^{(\alpha,\beta)}\}_{n\ge0}$ of Jacobi type, where the parameters $\alpha, \beta$ take the values $\alpha\in\ZZ_{\ge0}$ and $\beta=1$; classical in the sense that they are eigenfunctions of a fixed second order differential operator.  This was attained from the sequences $\{F_n\}_{n\ge0}$, mentioned above, by defining the matrix valued function $\Phi_n^{\alpha,1}$ by means of $\Phi_n^{\alpha,1}=F_nF_0^{-1}$. This occurred after a useful conversation of F. A. Gr\"unbaum with A. Dur\'an that steered him to this definition, see Section 5 of \cite{GPT3}.  These were the first  examples of such classical sequences of matrix orthogonal polynomials which were sought for by the  experts since the publication \cite{D} of A. Dur\'an.  Then in \cite{G} F. A. Gr\"unbaum gave a completely explicit description of an extension of these sequences $\{\Phi_n^{(\alpha,\beta)}\}$ to arbitrary values of the  parameters $\alpha, \beta>-1$, in the case of size two.  The sequences $\{\Phi_n^{(\alpha,\beta)}\}_{n\ge0}$ become classical sequences of matrix orthogonal polynomials. After that, many more examples of these of polynomials were found by analyzing different compact symmetric pairs of rank one, see for example \cite{PT1, PT2, RT, P, Z, M, PT3, HM, PTZ, TZ},  and by some ad-hoc methods as in \cite{DG1, DG2}.

In \cite{RT} through a trial and error process we found a function $\Psi$ used to play the same role as $F_0$ in \cite{GPT2, GPT3, GPT4, G}, in describing the irreducible spherical functions of any $K$-type associated to the symmetric pairs $(\SU(3), \U(2))$ and $(\SU(2,1), \U(2))$. The same function $\Psi$ was used in \cite{PT2, PT3} to build sequences of classical orthogonal polynomials from sequences of matrix
polynomials $\{F_n\}_{n\ge0}$ constructed from irreducible spherical functions of the pairs $(\SU(d+1),\U(d))$. This same strategy was used in  \cite{PTZ, Z, TZ}.

The sequences of matrix polynomials $\{F_n\}_{n\ge0}$ constructed from the irreducible spherical functions of compact symmetric pairs $(G,K)$ of rank one, for some  fixed $K$-types, are sequences of matrix orthogonal functions  with respect to a weight function $W$, satisfying a three term recursion relation and such that  $\det(F_{0})$ is not  zero almost everywhere. Under these conditions we prove that $\{Q_n=F_nF_0^{-1}\}_{n\ge0}$ is a sequence of matrix orthogonal polynomials with respect to the weight function $F_0WF_0^*$, see Theorem \ref{main}. For this reason such sequences are called pre-sequences of matrix orthogonal polynomials.

The simple and fundamental fact expressed in Theorem \ref{main}, which is the core of this paper, was
conceived in January 2012 and immediately  shared with my coauthors.  At the time Maarten van Pruijssen was working on his Ph.D. thesis  where he extends the results obtained by many of us  for  $(\SU(d+1),\U(d))$ and $(\SO(d+1),\SO(d))$,  to other compact Gelfand pairs $(G,K)$ of rank one. But in these new cases he needs to restrict his attention to those  $K$-types $\pi$ such that $\ind_K^G(\pi)$ is a multiplicity free direct sum of irreducible representations of $G$. In this very nice extension he uses our idea in defining the function $\Psi=F_0$, see Theorem 3.5.12 in \cite{M}, also  consult \cite{HM, MR}.

\end{section}

\begin{section}{Pre-sequences of MOP}
By a weight matrix of size $N$ on the real line $W=W(x)$ we mean
an integrable function on an interval $(a,b)$, such that $W(x)$
is a (self-adjoint) positive semidefinite $N\times N$-matrix for all $x\in (a,b)$, which
is positive definite almost everywhere and with finite moments of all orders: for all
$n\in\NN_0$ we have
$$\int_a^b x^n W(x)\,dx\in\Mat_N(\CC).$$

More generally we could assume that $W$ is a Borel measure on the real line
of $N\times N$-matrices, such that: $W(X)\in\M_N(\CC)$ is positive semidefinite for
any Borel set $X$, $W$ has finite moments of any order, and $W$ is nondegenerate, that
is for P in the polynomial ring $\Mat_N(\CC)[x]$
$$(P,P)=\int_\RR P(x)\,dW(x)P(x)^*=0,$$
only when $P=0$. If $M\in\Mat_N(\CC)$, $M^*$ denotes the conjugate transpose of $M$.

Given a  weight matrix $W=W(x)$ of size $N$ on the real line, supported on the closed interval $[a,b]$,
we introduce as in \cite{K1} and \cite{K2} the following Hermitian
sesquilinear form in the linear space $\Mat_N(\CC)[x]$,
$$(P,Q)=\int_a^bP(x)W(x)Q(x)^*\,dx.$$
The following properties hold:  for all
$P,Q,R\in\Mat_N[x]$,  $T\in\Mat_N(\CC)$ and $a,b\in\CC$ we have
\begin{align*}
(aP+bQ,R)&=a(P,R)+b(Q,R), \\
(TP,Q)&=T(P,Q),\\
(P,Q)^*&=(Q,P),\\
(P,P)&\ge0; \quad \text{if} \;(P,P)=0\;\text{then}\; P=0.
 \end{align*}
These properties follow directly from the definition except the
last one (cf. Proposition 2.2 in \cite{GT}). In other
words $\Mat_N(\CC)[x]$ is a left inner product $\Mat_N(\CC)$-module.

By a sequence of matrix orthogonal polynomials $\{Q_n\}_{n\ge0}$ of size $N$ we mean a sequence of matrix polynomials such that $\deg(Q_n)=n$, the leading coefficient of $Q_n$ is nonsingular and $(Q_m,Q_n)=0$ for all $m\ne n$. Then one establishes that there exists a unique sequence of monic orthogonal polynomials $\{P_n\}_{n\ge0}$ and that any sequence $\{Q_n\}_{n\ge0}$ of matrix orthogonal polynomials is of the form $Q_n=M_nP_n$ where $M_n\in\GL_N(\CC)$ is arbitrary for each $n\ge0$ (cf. Corollary 2.5 in  \cite{GT}).

Two matrix weights are said to be equivalent $W'\sim W$, if  $W'=MWM^*$ for some $M\in \GL_N(\CC)$.  In such a case the map $\{Q_n\}_{n\ge0}\mapsto\{MQ_nM^{-1}\}_{n\ge0}$ establishes a bijection between the set of all sequences of matrix orthogonal polynomials with respect to $W$ and the corresponding set with respect to $W'$.

A standard argument, given for instance in \cite{K1,K2} shows
that a sequence of matrix  orthogonal polynomials $\{Q_n\}_{n\ge0}$
satisfies a three term recursion relation
\begin{equation}\label{eq1}
xQ_n(x) = A_nQ_{n-1}(x) + B_nQ_n(x) + C_nQ_{n+1}(x),\quad n \ge 0
\end{equation}
where $A_0 = 0$ and $C_n$ is a nonsingular matrix .

\

We will consider a sequence $\{F_n\}_{n\ge0}$, of matrix valued functions on $(a,b)$ such that
$\det(F_{0})(x)$ is not  zero almost everywhere. Let $W=W(x)$ be a weight matrix on $(a,b)$ except that we not require $W$ to have finite moments of all orders. If besides $F_i(x)W(x)F_j(x)^*$ is an integrable function on $[a,b]$ for all $i,j\ge0$ and
$$\int_a^b F_i(x)W(x)F_j(x)^*\, dx=0 \quad \text{for all} \quad i\ne j,$$
such a sequence will be called a sequence of  matrix orthogonal functions.

\begin{thm}\label{main} Let $\{F_n\}_{n\ge 0}$ be a sequence of matrix orthogonal functions on $(a,b)$ with respect to a weight function $W$ supported in $[a,b]$. If the sequence satisfies a three term recursion relation of the form
$$xF_n=A_nF_{n-1}+B_nF_n+C_nF_{n+1} \quad n\ge0$$
with $A_{0}=0$ and $C_{n}$ nonsingular, then $F_n=Q_nF_0$ where $Q_n$ is a uniquely determined polynomial of degree $n$ with a nonsingular leading term, for all $n\ge0$.  The moments of $W'=F_0WF_0^*$ are all finite and the sequence $\{Q_n\}_{n\ge0}$ is a sequence of matrix orthogonal polynomials with respect to $W'$, satisfying
\begin{equation}\label{threeterm}
xQ_n=A_nQ_{n-1}+B_nQ_n+C_nQ_{n+1} \quad n\ge 0.
\end{equation}
 \end{thm}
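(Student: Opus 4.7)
The plan is to construct the $Q_n$ inductively from the three-term relation and then transfer the orthogonality from $W$ to $W' = F_0 W F_0^*$. Set $Q_0 := I$, and for $n \geq 0$ define
$$Q_{n+1}(x) := C_n^{-1}\bigl(xQ_n(x) - A_n Q_{n-1}(x) - B_n Q_n(x)\bigr),$$
which is well-posed because $C_n$ is nonsingular. An easy induction using the recurrence for $\{F_n\}$ shows that $F_n = Q_n F_0$, that $\deg Q_n = n$, and that the leading coefficient of $Q_n$ equals $C_{n-1}^{-1}\cdots C_0^{-1}$, hence is nonsingular. Uniqueness of $Q_n$ follows from $\det(F_0) \neq 0$ almost everywhere: if $\widetilde Q_n$ is another polynomial with $\widetilde Q_n F_0 = Q_n F_0$, then $(Q_n - \widetilde Q_n)(x) = 0$ on a set of positive measure, forcing $Q_n - \widetilde Q_n \equiv 0$ as a polynomial.

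Next I would verify that the moments of $W' = F_0 W F_0^*$ are finite. Since $Q_n$ has degree $n$ with nonsingular leading coefficient, a triangular inversion on the leading coefficients shows that every monomial $x^n I$ can be written as $\sum_{k=0}^n M_{n,k}\, Q_k(x)$ for suitable $M_{n,k}\in\Mat_N(\CC)$. Substituting and using $Q_k F_0 = F_k$,
$$\int_a^b x^n W'(x)\,dx \;=\; \sum_{k=0}^n M_{n,k}\int_a^b Q_k(x) F_0(x) W(x) F_0(x)^*\,dx \;=\; \sum_{k=0}^n M_{n,k}\int_a^b F_k(x) W(x) F_0(x)^*\,dx,$$
and each summand on the right is finite by hypothesis. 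The case $n=0$ gives integrability of $W'$; the positivity and almost-everywhere positive-definiteness of $W'$ follow from those of $W$ together with the a.e.\ nonsingularity of $F_0$, so $W'$ is a bona fide weight matrix.

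Orthogonality of $\{Q_n\}$ with respect to $W'$ now reduces to orthogonality of $\{F_n\}$ with respect to $W$ via the same substitution:
$$\int_a^b Q_m(x) W'(x) Q_n(x)^*\,dx \;=\; \int_a^b F_m(x) W(x) F_n(x)^*\,dx \;=\; 0, \qquad m\neq n.$$
Together with $\deg Q_n = n$ and nonsingular leading term this matches the paper's definition of a sequence of matrix orthogonal polynomials, and the recursion \eqref{threeterm} holds by construction of $Q_{n+1}$. I do not anticipate any substantive obstacle; the only nontrivial bookkeeping is the monomial expansion used to establish finiteness of the moments of $W'$, and the a.e.\ invertibility of $F_0$ is exactly what is needed to cancel $F_0$ on the right when passing between polynomial identities for $Q_n$ and functional identities for $F_n$.
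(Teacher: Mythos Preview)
Your argument is correct and follows essentially the same route as the paper: inductive construction of $Q_n$ from the recursion with $Q_0=I$, identification of degree and leading coefficient, uniqueness via a.e.\ invertibility of $F_0$, and transfer of orthogonality from $W$ to $W'=F_0WF_0^*$ by the substitution $Q_nF_0=F_n$. Your treatment of the finiteness of the moments of $W'$ (via the triangular expansion of $x^nI$ in the $Q_k$) is more explicit than the paper's, which simply observes that once $\{Q_n\}$ is known to be orthogonal with respect to $W'$ the finiteness of moments follows; both arrive at the same conclusion.
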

 \begin{proof}
 By induction on $n\ge0$ we will prove that $F_n=Q_{n}F_{0}$ where $Q_{n}$ is a polynomial. For $n=0$ we have $F_{0}=Q_0F_{0}$ which is true for $Q_0=I$.  Now we assume that $F_v=Q_{v}F_{0}$ with $Q_{v}$  polynomial for all $0\le v\le n$. Then we have
\begin{equation*}
\begin{split}
xQ_{n}F_0&=xF_n=A_{n}F_{n-1}+B_{n}F_{n}+C_{n}F_{n+1}\\
             &=A_{n}Q_{n-1}F_{0}+B_{n}Q_{n}F_{0}+C_{n}F_{n+1}.
\end{split}
\end{equation*}
Now by hypothesis $C_n$ is nonsingular for all $n\ge0$. Then we have
$$F_{n+1}=C_n^{-1}\big(xQ_{n}-A_{n}Q_{n-1}-B_{n}Q_{n}\big)F_{0}.$$
Thus we can take
\begin{equation}\label{inductivestep}
Q_{n+1}=C_n^{-1}\big(xQ_{n}-A_{n}Q_{n-1}-B_{n}Q_{n}\big),
\end{equation}
completing the proof of the inductive step.
Now from \eqref{inductivestep} it follows by induction on $n\ge0$ that $Q_{n}$ is a polynomial of degree $n$ with a nonsingular  leading coefficient.

If a matrix polynomial $P=P(x)$ satisfies $P(x)F_0(x)=0$ for all $x$, then $P(x)=0$ for almost all $x$, because by hypothesis $F_0(x)$ is not singular for almost all $x\in(a,b)$.  Hence $P=0$. Taking this into account, the three term recursion relation satisfied by $\{Q_n\}_{n\ge0}$ is obtained from the one in the hypothesis, by canceling $F_{0}$ from both side of such equations. Similarly the  uniqueness of the  $Q_{n}$ follows from the same reason.

Moreover, for all $i,j\ge0$ we have
\begin{equation*}
\begin{split}
\int_a^b Q_i(x)W'(x)Q_j(x)^*\, dx&=\int_a^b Q_i(x) F_0(x)W(x)F_0(x)^*Q_j(x)^*\, dx\\
&=\int_a^b F_i(x)W(x)F_j(x)^*\, dx=\delta_{ij}(F_i,F_j).
\end{split}
\end{equation*}
Hence $\{Q_n\}_{n\ge0}$ is a sequence of matrix orthogonal polynomials with respect to $W'$ which implies that all its moments are finite.
The theorem is proved. \qed
\end{proof}

\

It is worth to observe that the hypothesis of integrability of $F_i(x)W(x)F_j(x)^*$ for all $i,j\ge0$ can be replaced by the hypothesis that $F_0(x)W(x)F_0(x)^*$ has all its moments finite.

We feel appropriate to refer to a sequence $\{F_n\}_{n\ge 0}$ of  matrix orthogonal functions that satisfies a three term recursion relation as \eqref{threeterm} as a {\it pre-sequence of matrix orthogonal polynomials}.

\

We come now to the notion of a differential operator with matrix
coefficients acting on matrix valued polynomials, i.e. elements of
$\Mat_N(\CC)[x]$. These operators could be made to act on our functions either
on the left or on the right. One finds a discussion of these two
actions in \cite{D}. The conclusion there is that if one wants to
have matrix weights that do not reduce to scalar weights and
that have matrix polynomials as their eigenfunctions, one should
settle for right-hand-side differential operators. We agree now to
say that $D$ given by
$$D=\sum_{i=0}^s \partial^iF_i(x),\quad\quad \partial=\frac{d}{dx},$$
acts on $Q(x)$ by means of
$$ QD = \sum_{i=0}^s \partial^i(Q)(x)F_i(x).$$

Given a sequence of matrix orthogonal polynomials $\{Q_n\}_{n\ge0}$ with respect to a weight matrix $W=W(x)$, we introduced in \cite{GT}  the algebra $D(W)$ of all right-hand side
differential operators with matrix valued coefficients that have the
polynomials $Q_w$ as their eigenfunctions.  Thus
\begin{equation}\label{D}
\mathcal D(W)=\{D:Q_nD=\Gamma_n(D)Q_n,\; \Gamma_n(D)\in\Mat_N(\CC)\;\text{for
all}\;n\ge0\}.
\end{equation}

\bigskip

The definition of $\mathcal D(W)$ depends only on the weight matrix $W$ and not on the sequence
$\{Q_n\}_{n\ge0}$, because $Q_n=M_nP_n$ for some $M_n\in\GL_N(\CC)$ where $\{P_n\}_{n\ge0}$ is the sequence of monic orthogonal polynomials. besides , if $D\in\mathcal D(W)$, then $D$ is hypergeometric, meaning that the coefficient of the $j$th-order term is a matrix polynomial of degree less or equal to $j$, see Proposition 2.6 of \cite{GT}.

If $W'\sim W$, say $W'=MWM^*$, $M\in\GL_N(\CC)$,  then the map $D\mapsto MDM^{-1}$ establishes an isomorphism  between the algebras $\mathcal D(W)$ and $\mathcal D(W')$. In fact, if $\{Q_n\}_{n\ge0}$ is a sequence of matrix orthogonal polynomials with respect to $W$, then $\{Q'_n=MQ_nM^{-1}\}_{n\ge0}$ is a sequence of matrix orthogonal polynomials with respect to $W'$. Moreover, if $Q_nD=\Gamma_n(D)Q_n$, then
$$Q'_n(MDM^{-1})= (M\Gamma_n(D)M^{-1})Q'_n.$$
Hence $MDM^{-1}\in\mathcal D(W')$ and $\Gamma_n(MDM^{-1})=M\Gamma_n(D)M^{-1}$.

\begin{cor}\label{algebra} Let $\{F_n\}_{n\ge 0}$ be a pre-sequence of matrix orthogonal polynomials such that $F_0$ is a $C^\infty$-function on $(a,b)$.
 If $F_nD=\Lambda_n F_n$ for all $n\ge0$, where $D$ is a right handed linear ordinary matrix differential operator, then $\tilde D=F_{0}DF_{0}^{-1}\in\mathcal D(F_{0}WF_{0}^*)$. In particular $\tilde D$ is hypergeometric. Moreover if $Q_n=F_n F_0^{-1}$, then
$Q_n\tilde D=\Lambda_n Q_n$.
\end{cor}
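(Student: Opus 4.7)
The plan is to exploit the factorization $F_n = Q_n F_0$ from Theorem \ref{main} and translate the eigenvalue equation $F_n D = \Lambda_n F_n$ into an eigenvalue equation for $Q_n$ by formally conjugating $D$ with $F_0$. The first step is to give $\tilde D = F_0 D F_0^{-1}$ a concrete meaning as a right-handed matrix differential operator with smooth coefficients.

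If $D = \sum_{j=0}^{s} \partial^j F_j(x)$, then for any smooth matrix-valued function $P(x)$, Leibniz's rule gives
\begin{equation*}
(PF_0)\,D\,F_0^{-1} \;=\; \sum_{j=0}^{s}\sum_{k=0}^{j}\binom{j}{k}\,\partial^k(P)\;\partial^{j-k}(F_0)\,F_j\,F_0^{-1}.
\end{equation*}
Collecting by order in $\partial$, I would \emph{define}
\begin{equation*}
\tilde D \;=\; \sum_{k=0}^{s}\partial^k\,G_k, \qquad G_k \;=\; \sum_{j\ge k}\binom{j}{k}\,\partial^{j-k}(F_0)\,F_j\,F_0^{-1},
\end{equation*}
a right-handed differential operator whose coefficients $G_k$ are $C^\infty$ matrix functions on the open dense subset of $(a,b)$ where $\det F_0\ne 0$; this is where the $C^\infty$ hypothesis on $F_0$ is used. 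By construction, $P\,\tilde D = (PF_0)\,D\,F_0^{-1}$ for every smooth $P$.

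The heart of the argument is then a one-line computation. Using $F_n = Q_n F_0$ together with the eigenvalue relation $F_n D = \Lambda_n F_n$,
\begin{equation*}
Q_n\,\tilde D \;=\; (Q_n F_0)\,D\,F_0^{-1} \;=\; F_n\,D\,F_0^{-1} \;=\; \Lambda_n F_n F_0^{-1} \;=\; \Lambda_n Q_n,
\end{equation*}
valid wherever $F_0^{-1}$ is defined and hence, by the almost-everywhere invertibility of $F_0$ (the same principle used in the proof of Theorem \ref{main} to cancel $F_0$ from polynomial identities), as an identity of matrix polynomials. This shows $\tilde D$ has each $Q_n$ as a right eigenfunction with matrix eigenvalue $\Lambda_n\in\Mat_N(\CC)$, so $\tilde D\in\mathcal D(W')$ with $\Gamma_n(\tilde D)=\Lambda_n$. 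The hypergeometric character of $\tilde D$ is then immediate from Proposition 2.6 of \cite{GT}, already invoked above.

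The main obstacle is not conceptual but one of verification: the coefficients $G_k$ produced by the Leibniz expansion are only \emph{a priori} smooth matrix functions, not obviously polynomial, and $\tilde D$ is not \emph{a priori} known to preserve $\Mat_N(\CC)[x]$. Both facts follow \emph{a posteriori}: once the eigenvalue identity $Q_n\tilde D = \Lambda_n Q_n$ is established on the basis $\{Q_n\}_{n\ge 0}$ of matrix polynomials, membership in $\mathcal D(W')$ is automatic, and Proposition 2.6 then upgrades the $G_k$ to matrix polynomials of degrees bounded by $k$.
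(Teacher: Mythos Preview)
Your argument is correct and is exactly the route the paper has in mind: the corollary is stated without proof because once Theorem~\ref{main} gives $F_n=Q_nF_0$, the chain $Q_n\tilde D=(Q_nF_0)DF_0^{-1}=F_nDF_0^{-1}=\Lambda_nF_nF_0^{-1}=\Lambda_nQ_n$ is immediate, and the hypergeometric conclusion is precisely the appeal to Proposition~2.6 of \cite{GT} already recorded in the text. Your Leibniz expansion and the remark that the polynomial nature of the coefficients $G_k$ is only an \emph{a posteriori} consequence are welcome clarifications rather than a different method.
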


\begin{cor} Let $\{F_n\}_{n\ge 0}$ be a pre-sequence of matrix orthogonal polynomials such that $F_0$ is a $C^\infty$-function on $(a,b)$.
 If  $D$ is a second order differential operator whose leading coefficient is $x(1-x)$ and  $F_nD=\Lambda_n F_n$ for all $n\ge0$,  then
 \begin{equation}\label{hypergeometric}
\tilde D=F_{0}DF_{0}^{-1}=\partial^2 x(1-x)+\partial(C-xU)-V,
\end{equation}
where $C,U,V$ are constant matrices.
\end{cor}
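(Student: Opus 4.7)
The plan is to combine Corollary~\ref{algebra} with a direct Leibniz expansion of $\tilde D=F_{0}DF_{0}^{-1}$. By that corollary $\tilde D\in\mathcal D(F_{0}WF_{0}^{*})$, and so $\tilde D$ is hypergeometric: writing $\tilde D=\partial^{2}H_{2}(x)+\partial H_{1}(x)+H_{0}(x)$, each $H_{j}$ is automatically a matrix polynomial of degree at most $j$. Hence $H_{1}(x)=C-xU$ and $H_{0}(x)=-V$ for some constant matrices $C,U,V$, and $H_{2}$ is a polynomial of degree $\le 2$. The only remaining point is to identify $H_{2}(x)=x(1-x)I$.

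For this I would compute the coefficients of $\tilde D$ directly. Writing $D=\partial^{2}x(1-x)+\partial G_{1}(x)+G_{0}(x)$ and using the right-action convention $QD=\sum_{i}\partial^{i}(Q)G_{i}$, the identity $Q\tilde D=\bigl((QF_{0})D\bigr)F_{0}^{-1}$ combined with Leibniz's rule gives
\[
H_{j}(x)=\sum_{i\ge j}\binom{i}{j}\partial^{i-j}(F_{0})(x)\, G_{i}(x)\, F_{0}(x)^{-1}.
\]
For $j=2$ this collapses to $H_{2}(x)=F_{0}(x)\cdot x(1-x)I\cdot F_{0}(x)^{-1}$, and since $x(1-x)I$ is a scalar matrix and hence central in $\Mat_{N}(\CC)$, we obtain $H_{2}(x)=x(1-x)I$ as required.

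What makes the statement essentially automatic is that all the serious work was done in Corollary~\ref{algebra}: without the conclusion that $\tilde D$ is hypergeometric, the Leibniz expressions for $H_{1}$ and $H_{0}$ would involve $F_{0}$, $F_{0}'$, $F_{0}''$ and $F_{0}^{-1}$ with no visible reason to simplify to polynomials of the advertised degrees. It is Corollary~\ref{algebra} that forces this simplification, while the only ingredient specific to the present corollary is the trivial but crucial observation that a scalar leading coefficient is preserved under conjugation by an arbitrary invertible matrix function $F_{0}$. For that reason I do not expect any real obstacle; the proof is a routine assembly of the hypergeometricity from the previous corollary with the one-line computation of $H_{2}$.
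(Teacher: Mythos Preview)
Your argument is correct and is exactly the one the paper has in mind: the corollary is stated there without proof, as an immediate consequence of Corollary~\ref{algebra} (which gives hypergeometricity of $\tilde D$, hence $\deg H_{j}\le j$) together with the observation that the scalar leading coefficient $x(1-x)I$ is unchanged under conjugation by $F_{0}$. Your Leibniz computation of $H_{2}$ makes that last point explicit and matches the later computation the paper carries out in the example of Section~3.
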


If a pre-sequence of matrix orthogonal polynomials $\{F_n\}_{n\ge0}$ comes from   the irreducible spherical functions, of a fixed $K$-type $\pi$,
of a compact symmetric pair $(G,K)$ of rank one, then one can determine  very explicitly a second order differential operator $D=\partial^2x(1-x)+\partial A_1(x)+A_0(x)$ such that $F_nD=\Lambda_n F_n$ for all $n\ge0$. In fact, this can be reached by considering the $\pi$-radial part of the Casimir operator of $G$.  For historical reasons  let $\Psi=F_0$. Then $\Psi D\Psi^{-1}=\tilde D$ is equivalent to $\Psi$ being a solution  of the following differential equations,
\begin{equation}\label{Psi'}
2x(1-x)\Psi'(x)+\Psi(x)A_1(x)=(C-xU)\Psi(x),
\end{equation}
\begin{equation}\label{Psi''}
x(1-x)\Psi''(x)+\Psi'(x)A_1(x)+\Psi(x)A_0(x)=-V\Psi(x).
\end{equation}
If one is able to compute $\Psi(x)^{-1}$,  from \eqref{Psi'} and \eqref{Psi''} one can determine the constant coefficients $C, U, V$ of the hypergeometric differential operator $\tilde D$ given in \eqref{hypergeometric}.

If $\Phi$ is an spherical function on $G$ of $K$-type $\pi$, then $\Phi(kgk')=\pi(k)\Phi(g)\pi(k')$, for all $g\in G$ and $k, k'\in K$. In particular if $X\in\liek$, then $[X\Phi](g)=\Phi(g)\dot\pi(X)$, for all $g\in G$. Let $I(\dot\pi)$ denote the kernel of $\dot\pi$ in $D(K)$ and  consider the left ideal $D(G)I(\dot\pi)$ and the quotient algebra $D(\pi)=D(G)^K/(D(G)^K\cap D(G)I(\dot\pi)).$
Then $D(\pi)$ is commutative, by hypothesis, because it can be embedded, using the Lepowsky antihomomorphism, into the commutative algebra  $D(A)\otimes \End_M(V_\pi)$, $M$ being the centralizer of $A$ in $K$.

Let  $D\in D(G)$. The  $\pi$-radial part $R(\pi,D)$ is a differential operator on $A$ that satisfies $R(\pi,D)(\Phi|_A)=D(\Phi)|_A$. By taking an appropriate local coordinate $x$ on $A$ we can consider $R(\pi,D)$ as a differential operator on  $(0,1)$. If we conjugate $R(\pi,D)$ with $\Psi=F_0$ we obtain a differential operator $\tilde D=\Psi D\Psi^{-1}\in D(\Psi W\Psi^*)$, see Corollary \ref{algebra}. The map $D\mapsto\tilde D$ is an injective homomorphism of $D(\pi)$ into $D(\Psi W\Psi^*)$, which is not surjective because $D(\Psi W\Psi^*)$ is not necessarily commutative. An interesting challenge would be to find an extension of such homomorphism to a global algebra containing $D(G)^K$ in such a way to obtain an isomorphism onto $D(\Psi W\Psi^*)$ (cf. \cite{M}).

\end{section}

\begin{section}{An example}
The following example is taken from Theorem 3.2 in \cite{P}. Let $n\in\NN_0$. Let us consider the following sequence of matrix polynomials
\begin{equation*}
F_w(x)=\begin{pmatrix}  {}_3\!F_2\left (\begin{smallmatrix} -w,\; w+n+3,\; 2\\
      3, \; 1 \end{smallmatrix}\,;x \right )&
     {}_2\!F_1\left (\begin{smallmatrix} -w ,\; w+n+3\\ 3\end{smallmatrix}\,
  ;x\right )\\{}_2\!F_1\left (\begin{smallmatrix} -w ,\; w+n+4\\
3\end{smallmatrix}\,
  ;x\right )&
   {}_3\!F_2\left (\begin{smallmatrix} -w-1,\; w+n+3,\; c+1\\
      3, \; c \end{smallmatrix}\,;x\right)
  \end{pmatrix},
\end{equation*}
$w$ is a nonnegative integer and
$c=\frac{(w+1)(w+n+3)}{(w+1)(w+n+3)+n}$. In particular
$$F_0(x)=\begin{pmatrix} 1&1\\1&1-(n+2)x \end{pmatrix}.$$
We observe that $\det(F_0(x))=-(n+2)x$. Hence $F_0(x)$ is singular only at $x=0$.

The rows of $F_w$ correspond to the irreducible spherical functions of type $(n,1)$ associated to the complex projective plane $P_2(\CC)=\SU(3)/\U(2)$ .

From the orthogonality relations of the irreducible spherical functions, it follows that the sequence $\{F_w\}_{w\ge0}$ is orthogonal with respect to the weight matrix
$$W(x)=\begin{pmatrix}x(1-x)^{n+1}&0\\0&x(1-x)^{n}\end{pmatrix},$$
supported in the closed interval $[0,1]$.

The following theorem is obtained from Theorem 3.5 in \cite{P}  by taking $\alpha=n$,
 $\beta=1$ and using that $F_w(x)=P_w(1-x)\psi^t(1-x)$.
\begin{thm}
The sequence $\{F_w (x)\}_{w\ge0}$ satisfies the following
three term recursion relation
\begin{equation*}\label{difference}
(1- x)F_w(x) =A_{w}F_{w-1}(x) + B_wF_w(x) + C_{w}F_{w+1}(x),
\end{equation*}
with
\begin{align*}
A_w& = \begin{pmatrix}
\frac{w(w+n)(w+n+2)}{(w+n+1)(2w+n+2)(2w+n+3)}
& \frac{w}{(w+1)(w+n+1)(2w+n+3)} \\
0 &
\frac{w(w+2)(w+n+1)}{(w+1)(2w+n+3)(2w+n+4)}
\end{pmatrix},\\
C_w&= \begin{pmatrix}
\frac{(w+1)(w+3)(w+n+3)}{(w+2)(2w+n+3)(2w+n+4)}
& 0\\
\frac{w+3}{(w+2)(w+n+3)(2w+n+4)} &
\frac{(w+3)(w+n+2)(w+n+4)}
{(w+n+3)(2w+n+4)(2w+n+5)}
\end{pmatrix},\\
B_w&=\begin{pmatrix}
B_w^{11} & \frac{w+n+3}{(w+2)(w+n+2)(2w+n+3)}\\
 \frac{w+n+1}{(w+1)(w+n+2)(2w+n+4)}& B_w^{22}
\end{pmatrix},
\end{align*}
where
\begin{align*}
  B_w^{11}&=
  \tfrac{(w+1)^2(w+3)}{(w+2)(2w+n+3)(2w+n+4)}
  + \tfrac{1}{(w+1)(w+2)(w+n+1)(w+n+2)} \\
  &\quad +
  \tfrac{(w+n)(w+n+2)^2}{(w+n+1)(2w+n+2)(2w+n+3)},\\
  B_w^{22}&=
  \tfrac{(w+1)(w+3)^2}{(w+2)(2w+n+4)(2w+n+5)}
  + \tfrac {(w+n+1)^2(w+n+3)}{(w+n+2)
  (2w+n+3)(2w+n+4)}.
\end{align*}
\end{thm}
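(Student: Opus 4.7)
The plan is to deduce the stated three-term recurrence directly from Theorem 3.5 of \cite{P} via the identification $F_w(x) = P_w(1-x)\psi^t(1-x)$ that is already noted in the paragraph above the theorem, where $\{P_w(y)\}_{w\ge 0}$ denotes the family of matrix Jacobi-type orthogonal polynomials of \cite{P} at parameter values $(\alpha,\beta)=(n,1)$. The key structural observation is that the right-factor $\psi^t(1-x)$ does not depend on $w$, so any left-coefficient three-term recurrence for $\{P_w(y)\}$ automatically transports to a recurrence of the same shape for $\{F_w(x)\}$.

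Concretely, I would first record the three-term recurrence that Theorem 3.5 of \cite{P} provides for $\{P_w(y)\}_{w\ge 0}$, which has the form
$$y\,P_w(y) = \tilde A_w P_{w-1}(y) + \tilde B_w P_w(y) + \tilde C_w P_{w+1}(y),$$
with coefficient matrices $\tilde A_w,\tilde B_w,\tilde C_w$ depending on $\alpha,\beta,w$. Substituting $y=1-x$ preserves the polynomial identity and yields the same relation in the variable $1-x$. Right-multiplying by the $w$-independent matrix $\psi^t(1-x)$ and invoking $F_w(x)=P_w(1-x)\psi^t(1-x)$ then produces
$$(1-x) F_w(x) = \tilde A_w F_{w-1}(x) + \tilde B_w F_w(x) + \tilde C_w F_{w+1}(x),$$
which is the target recurrence with $A_w=\tilde A_w$, $B_w=\tilde B_w$, $C_w=\tilde C_w$. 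Nonsingularity of $C_w$ and $A_0=0$ are inherited from the corresponding properties of the orthogonal polynomial recurrence for $\{P_w\}$.

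The remaining task is purely computational: specialise the coefficient matrices from \cite{P} to $\alpha=n$, $\beta=1$, and check entry-by-entry that they agree with the closed-form expressions displayed in the statement. This is the step I expect to be the main obstacle, not for conceptual reasons but because the entries are rational functions of $w$ and $n$ with denominators involving $(2w+n+k)$ for several values of $k$, and the diagonal entries $B_w^{11}, B_w^{22}$ are three-term sums; aligning the two presentations may well require a mild algebraic rearrangement, and potentially the absorption of a constant diagonal conjugation into $\psi^t$ to match normalisations. Once this bookkeeping is done, no further analytic input is needed and the recurrence follows.
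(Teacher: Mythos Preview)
Your proposal is correct and follows exactly the route the paper indicates: the paper states that the theorem ``is obtained from Theorem 3.5 in \cite{P} by taking $\alpha=n$, $\beta=1$ and using that $F_w(x)=P_w(1-x)\psi^t(1-x)$,'' and you have spelled out precisely the mechanics behind that one-line justification---substituting $y=1-x$, right-multiplying by the $w$-independent factor $\psi^t(1-x)$, and specialising the parameters. The paper provides no further proof beyond that sentence, so your expansion of the argument is entirely in line with (and more detailed than) what the paper offers.
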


Therefore the sequence $\{F_w\}_{w\ge0}$ is a pre-sequence of matrix orthogonal polynomials. Hence, according to Theorem \ref{main}, $\{Q_w=F_wF_0^{-1}\}_{w\ge0}$ is a sequence of matrix orthogonal polynomials with very nice properties which can be given explicitly, since $F_0^{-1}$ can be computed.

A careful computation leads to
$$Q_w(x)=F_w(x)F_0(x)^{-1}=\begin{pmatrix} Q^w_{11}(x)&Q^w_{12}(x)\\Q^w_{21}(x)&Q^w_{22}(x)
\end{pmatrix},$$
where
\begin{equation*}
\begin{split}
Q^w_{11}(x)&={}_{3}\!F_2\left (\begin{smallmatrix} -w ,\; w+n+3,\; 2\\
      3, \; 1 \end{smallmatrix}\,; x\right )+
     \tfrac{w(w+n+3)}{3(n+2)}\,  {}_{2}\!F_1\left
     (\begin{smallmatrix} -w+1 ,\; w+n+4\\ 4\end{smallmatrix}\,
  ; x\right ),\\
Q^w_{12}(x)&=\tfrac{-w(w+n+3)}{3(n+2)}\,  {}_{2}\!F_1\left
     (\begin{smallmatrix} -w+1 ,\; w+n+4\\ 4\end{smallmatrix}\,
  ; x\right ),\\
Q^w_{21}(u)&={}_{2}\!F_1\left(\begin{smallmatrix} -w ,\; w+n+4\\
  3\end{smallmatrix}\, ; x \right)
   -\tfrac{s_w}{3(n+2)}\,  {}_{3}\!F_2\left (\begin{smallmatrix} -w ,\; w+n+4,\; s_w+1\\
     4,\; s_w \end{smallmatrix}\, ; x \right),\\
Q^w_{22}(x)&=\tfrac{s_w}{(n+2)}\,  {}_{3}\!F_2\left (\begin{smallmatrix} -w ,\; w+n+4,\; s_w+1\\
     4,\; s_w \end{smallmatrix}\, ; x \right),
\end{split}
\end{equation*}
with $s_w=w(w+n+4)+3(n+2)$.
It is worth to observe at this point that $\deg(F_w)=w+1$ and that $\deg(Q_w)=w$. Moreover, the leading coefficients of $F_w$ and of $Q_w$ are, respectively, the following:
$$\LC(F_w)=\begin{pmatrix} 0&0\\0&\tfrac{(-1)^{w+1}(w+n+3)_{w+1}(c+w+1)}{(3)_{w+1}c}\end{pmatrix},$$
$$\LC(Q_w)=\begin{pmatrix} \tfrac{(-1)^{w}(w+n+3)_{w}2}{(2+w)w!}&0\\\tfrac{(-1)^w(w+n+4)_ww(w-3)}{(3)_{w+1}(n+2)}&\tfrac{(-1)^{w}(w+n+4)_{w}(s_w+w)}
{(n+2)(4)_{w}}\end{pmatrix}.$$
In particular we see that the leading coefficients of $F_w$ are singular while those of $Q_w$ are nonsingular, for all $w\ge0$.

The orthogonality matrix weight of $\{Q_w\}_{w\ge0}$ is
$$W'=F_0WF_0^*=x(1-x)^n\begin{pmatrix}2-x&2-(n+3)x\\2-(n+3)x&1-x+(1-(n+2)x)^2\end{pmatrix},$$
supported in the closed interval $[0,1]$. It is always interesting to wonder if a given matrix weight is irreducible or not. To this end we look at the commuting space $\mathcal C$ of $W'$.

The commuting space $\mathcal C$ of $W'$ is the set of all constant matrices such that $TW'(x)=W'(x)T^*$ for all $x\in[0,1]$. If $T=(t_{ij})_{1\le i,j\le2}$ belongs to $\mathcal C$ if and only if the following equations hold for all $x$,
\begin{equation*}
\begin{split}
(1,1)&:\quad t_{11}(2-x)+t_{12}(2-(n+3)x)=(2-x)\bar t_{11}+(2-(n+3)x)\bar t_{12},\\
(1,2)&:\quad t_{11}(2-(n+3)x)+t_{12}(1-x+(1-(n+2)x)^2)\\
&\hskip5.8cm=(2-x) \bar t_{21}+(2-(n+3)x)\bar t_{22},\\
(2,1)&:\quad t_{21}(2-x)+t_{22}(2-(n+3)x)\\
&\hskip3.3cm=(2-(n+3)x)\bar t_{11}+(1-x+(1-(n+2)x)^2)\bar t_{12},\\
(2,2)&:\quad t_{21}(2-(n+3)x)+t_{22}(1-x+(1-(n+2)x)^2)\\
&\hskip3.3cm=(2-(n+3)x)\bar t_{21}+(1-x+(1-(n+2)x)^2)\bar t_{22}.
\end{split}
\end{equation*}
From $(1,2)$ it follows right away that $t_{12}=0$. Then from $(1,1)$ one obtains that $t_{11}=\bar t_{11}$. Similarly, from $(2,2)$ it follows right away that $t_{22}=\bar t_{22}$, and then that $t_{21}=\bar t_{21}$. At this point equation $(1,2)$ has been reduced to
$$t_{11}(2-(n+3)x)=(2-x)t_{21}+(2-(n+3)x)t_{22}.$$
Since this should hold for all $x$ we obtain that $t_{11}=t_{22}$ and that $t_{21}=0$. Therefore $\mathcal C=\RR I$, which is equivalent to $W'$ being irreducible, see Corollary 1.13 of \cite{T2}.

\

Since the irreducible spherical functions are eigenfunctions of the Casimir operator of $\SU(3)$, it follows that (cf. Section 3 of \cite{P}) $F_w D=\Lambda_w F_w$, $w\ge0$, where
$$D=\partial^2 x(1-x)+\partial\begin{pmatrix}2-(n+4)x&0\\0&2-(n+3)x\end{pmatrix}+
\frac1{x}\begin{pmatrix}-1&1-x\\1&-1+x\end{pmatrix}$$
and
$$\Lambda_w=\begin{pmatrix}-w(w+n+3)&0\\0&-w(w+n+4)-n-2\end{pmatrix}.$$

Now we will compute $F_0DF_0^{-1}$.
\begin{equation*}
\begin{split}
F_0&DF_0^{-1}\\
&=\partial^2 x(1-x)+\partial\left(2F'_0F_0^{-1}x(1-x)+F_0\begin{pmatrix}2-(n+4)x&0\\0&2-(n+3)x\end{pmatrix}F_0^{-1}\right)\\
&\hskip.3cm+F'_0\begin{pmatrix}2-(n+4)x&0\\0&2-(n+3)x\end{pmatrix}F_0^{-1}+\frac1{x}F_0\begin{pmatrix}-1&1-x\\1&-1+x\end{pmatrix}F_0^{-1}.
\end{split}
\end{equation*}
We have
$$F_0^{-1}=\frac{-1}{(n+2)x}\begin{pmatrix}1-(n+2)x&-1\\-1&1\end{pmatrix}\quad\text {and}\quad  F'_0F_0^{-1}=
\frac1{x}\begin{pmatrix}0&0\\-1&1\end{pmatrix},$$
\begin{equation*}
\begin{split}
F_0&\begin{pmatrix}2-(n+4)x&0\\0&2-(n+3)x\end{pmatrix}F_0^{-1}\\
&\hskip1,5cm=\frac1{n+2}\begin{pmatrix}
2n+5-(n+2)(n+4)x&-1\\1-(n+2)x&2n+3-(n+2)(n+3)x\end{pmatrix},
\end{split}
\end{equation*}
$$F'_0\begin{pmatrix}2-(n+4)x&0\\0&2-(n+3)x\end{pmatrix}F_0^{-1}=\frac1{x}\begin{pmatrix}
0&0\\-2+(n+3)x&2-(n+3)x\end{pmatrix},$$
$$F_0\begin{pmatrix}-1&1-x\\1&-1+x\end{pmatrix}F_0^{-1}=\begin{pmatrix}
0&0\\2-(n+3)x&-2+x\end{pmatrix}.$$

This finally gives
$$\tilde D=F_0DF_0^{-1}=\partial^2x(1-x)+\partial(C-xU)-V,$$
where
$$C=\frac1{n+2}\begin{pmatrix} 2n+5&-1\\-2n-3&4n+7\end{pmatrix},\; U=\begin{pmatrix} n+4&0\\-1&n+5\end{pmatrix},\; V=\begin{pmatrix} 0&0\\0&n+2\end{pmatrix}.$$

Since $F_w D=\Lambda_w F_w$ and $Q_w=F_wF_0^{-1}$ for all $w\ge0$, it follows that $Q_w \tilde D=\Lambda_w Q_w$ for all $w\ge0$. Let $Q_{j,w}(x)=(Q_{j,1}(x),Q_{j,2}(x))$ be the $j$th-row of the matrix polynomial $Q_w$, for $j=1,2$. Then
\begin{equation}\label{hypereq}
x(1-x)(Q^t_{j,w})''+(C^t-xU^t)(Q^t_{j,w})'-(V^t+\lambda_{j,w})Q^t_{j,w}=0,
\end{equation}
where $\lambda_{1,w}=-w(w+n+3)$ and $\lambda_{2,w}=-w(w+n+4)-n-2$.

The characteristic polynomial of $C^t$ is $p(\lambda)=(\lambda-2)(\lambda-4)$. Since the eigenvalues of $C^t$ are not in the set $\{0,-1,-2,\dots\}$, the analytic solutions at $x=0$ of \eqref{hypereq} are given in terms of the hypergeometric function  (cf. Definition 3 in \cite{T1})
$$Q^t_{j,w}(x)={}_{2}\!H_1\left(\begin{smallmatrix} U^t,\; V^t+\lambda_{j,w}\\ C^t\end{smallmatrix}\, ; x\right )Q^t_{j,w}(0),$$
with $Q_{1,w}(0)=\frac{w}{3(n+2)}(w+n+3,-w-n-3)$, $Q_{2,w}(0)=\frac{w}{3(n+2)}(-w-n-4,w+n+4)$.

\end{section}

\newcommand\bibit[5]{\bibitem 
{#2}#3, {\em #4,\!\! } #5}

\end{document}